\theoremstyle{plain}
\newtheorem{theorem}{Theorem}[section] 
\newtheorem{lemma}[theorem]{Lemma}
\theoremstyle{definition}
\newtheorem{remark}[theorem]{Remark}
\theoremstyle{remark}
\mathchardef\emptyset="001F
\numberwithin{equation}{section}
\def \Om {\Omega}
\def \R {\mathbb R}
\def \d {\delta}
\newcommand\ee{\end{equation}}
\newcommand\be{\begin{equation}}
\renewcommand{\d}{\ensuremath{\,\mathrm{d}}}
\title[Jerky crack growth in elastoplastic materials]
{A numerical study of the jerky crack growth in elastoplastic materials with localized plasticity}
\author[Gianni Dal Maso]{Gianni Dal Maso}
\address[Gianni Dal Maso]{SISSA, Via Bonomea 265, 34136 Trieste,
Italy}
\email[Gianni Dal Maso]{dalmaso@sissa.it}
\author[Luca Heltai]{Luca Heltai}
\address[Luca Heltai]{SISSA, Via Bonomea 265, 34136 Trieste,
Italy}
\email[Luca Heltai]{luca.heltai@sissa.itt}
\begin{document}

\thanks{Preprint SISSA 06/2020/MATE}

\begin{abstract}

We present a numerical implementation of a model of quasi-static crack growth in linearly elastic-perfectly plastic materials. We assume that the displacement is antiplane, and that the cracks and the plastic slips are localized on a prescribed path. We provide numerical evidence of the fact that the crack growth is intermittent, with jump characteristics that depend on the material properties.

\end{abstract}

\maketitle

{\small{{\it Keywords:\/} fracture mechanics, plasticity, quasistatic
    evolution, rate-independent problems, finite element method}
}

{\small{{\it 2010 MSC:\/}
49K10,  	
35J05,  	
35J25,  	
65N22,    
74A45,  	
74C05  	
}}

\begin{center}
    \it Dedicated to Umberto Mosco on the occasion of his 80th birthday
\end{center}
\section{Introduction}\label{intro}
Models of quasi-static crack growth in elasto-plastic materials provide a description of fracture which is more accurate with respect to the classic purely brittle approach (see, for example,~\cite{Rice,Rice-Sor,Hutch} and, for a numerical treatment, \cite{Ortiz1999, DeLorenzisMcBrideReddy2016, WickWickHellmig2015}).  Indeed,  in real materials the crack tip is always surrounded by a process zone where several nonlinear phenomena occur, among which plastic slip is one of the most relevant.

The case of crack growth in linearly elastic--perfectly plastic materials in the small strain regime was considered in~\cite{DM-Toa} in the framework of the energetic approach to rate independent systems (for which we refer to~\cite{Mie-Rou} and the references therein).

Recent developments~\cite{DM-Toa2} have shown that in this model the crack growth is jerky. To be precise, the mathematical proof of this result has been obtained in a simplified setting in the antiplane case in dimension two, where both the plasticity and the crack are confined on a prescribed path. This jerky behavior is in agreement with experimental results~\cite{Hull,Descatha} and with numerical evidence (see, for example,~\cite{Bra-Tan-Bour-Bha, Ortiz1999}).

In this work we provide a numerical implementation of the model introduced in~\cite{DM-Toa2}, under suitable  symmetry assumptions on the domain and on the boundary conditions. We confirm numerically that the crack growth is indeed intermittent, at least in those regimes where the plastic behavior cannot be neglected. Moreover, we provide examples which show the dependency of the number of jumps on the problem parameters. 

Thanks to some symmetry assumptions, the contribution of the plastic part of the problem reduces to a non-homogenous Neumann boundary condition on the cohesive portion of the crack path, which is itself an unknown of the problem (see Figure~\ref{BVP}).

When considering a rectangular domain and prescribing a straight crack path, we prove that the cohesive zone is a segment, and we provide a constructive way to identify the coordinates of the end points of this segment. This is done by solving a series of standard Laplace equations in the rest of the domain in order to minimize a suitable energy with respect to the coordinates of the end points. The presented strategy allows us to avoid the typical difficulties that are present in the solution of classical minimization problems of elasto-plasticity.

\section{Formulation of the problem}\label{Setting}
 
We consider a simplified quasi-static model of crack growth in elasto-plastic materials, in the discrete time formulation with prescribed crack path. Our model is antiplane, with reference configuration given by  the rectangle
\begin{equation}\label{ex1111}
\Om:=(0,a)\times(-b,b),
\end{equation}
for some $a>0$ and $b>0$. 

Given $T$ and $n$, let $t_i:=iT/n,$, $i=0,\ldots,n$ be a discrete sampling of the time interval $[0,T]$. The displacement at time $t_i$ is given by a scalar function $u_i\colon\Omega\to \R$, which takes a prescribed value on the Dirichlet part of the boundary, given by 
\begin{equation}\label{dirichlet boundary conditions}
\partial_D\Omega:=[0,a]\times\{-b,b\}.
\end{equation}
Namely, we assume that at time $t_i$
\begin{equation}\label{boundary condition}
u_i(x,\pm b)=\pm w_i(x)\quad \hbox{for every } x \in (0,a),
\end{equation}
where the functions $w_i \colon [0,a] \mapsto \R$ satisfy the following three conditions:
\begin{eqnarray}
&w_i\in C^\infty([0,a]) \hbox{ and not identically zero for } i=0,1,\ldots,n,\label{w(t) Cinfty}
\\
&0\leq w_{i-1}\leq w_i \quad\hbox{on }[0,a]\hbox{ for } i=1,\ldots,n,
\label{increasing in t}
\\
&w_i\hbox{ is nonincreasing and nonnegative on }[0,a]\hbox{ for } i=0,1,\ldots,n.
\label{decreasing in x}
\end{eqnarray}

The crack path is the segment
\begin{equation}\label{gamma}
\Gamma:=[0,a]\times\{0\},
\end{equation}
and in our model we assume also that the plastic strain is concentrated on $\Gamma$. For convenience, we indicate the set $[s_1, s_2]\times\{0\}$ with $\Gamma_{s_1}^{s_2}$.

The crack at time $t_i$ has the form $\Gamma_0^{s_i}$ for some $s_i\in[0,a]$, which represents the position of the crack tip. 
Since the plastic strain is concentrated on $\Gamma$, for every $t_i$ the corresponding displacement $u_i$ belongs to $H^1(\Om\setminus\Gamma)$. The plastic slip is localized
on $\Gamma_{s_i}^a$ and is given by $[u_i]$, where, for every $v\in
H^1(\Om\setminus\Gamma)$, we set  $[v](x) = \lim_{y\to
  0^+}\big(v(x,y)-v(x,-y)\big)$ in the sense of traces. We refer to
Figure~\ref{fig:domain-scheme} for a schematic of the domain and of
the boundary conditions.
\begin{figure}
  \centering
  \includegraphics{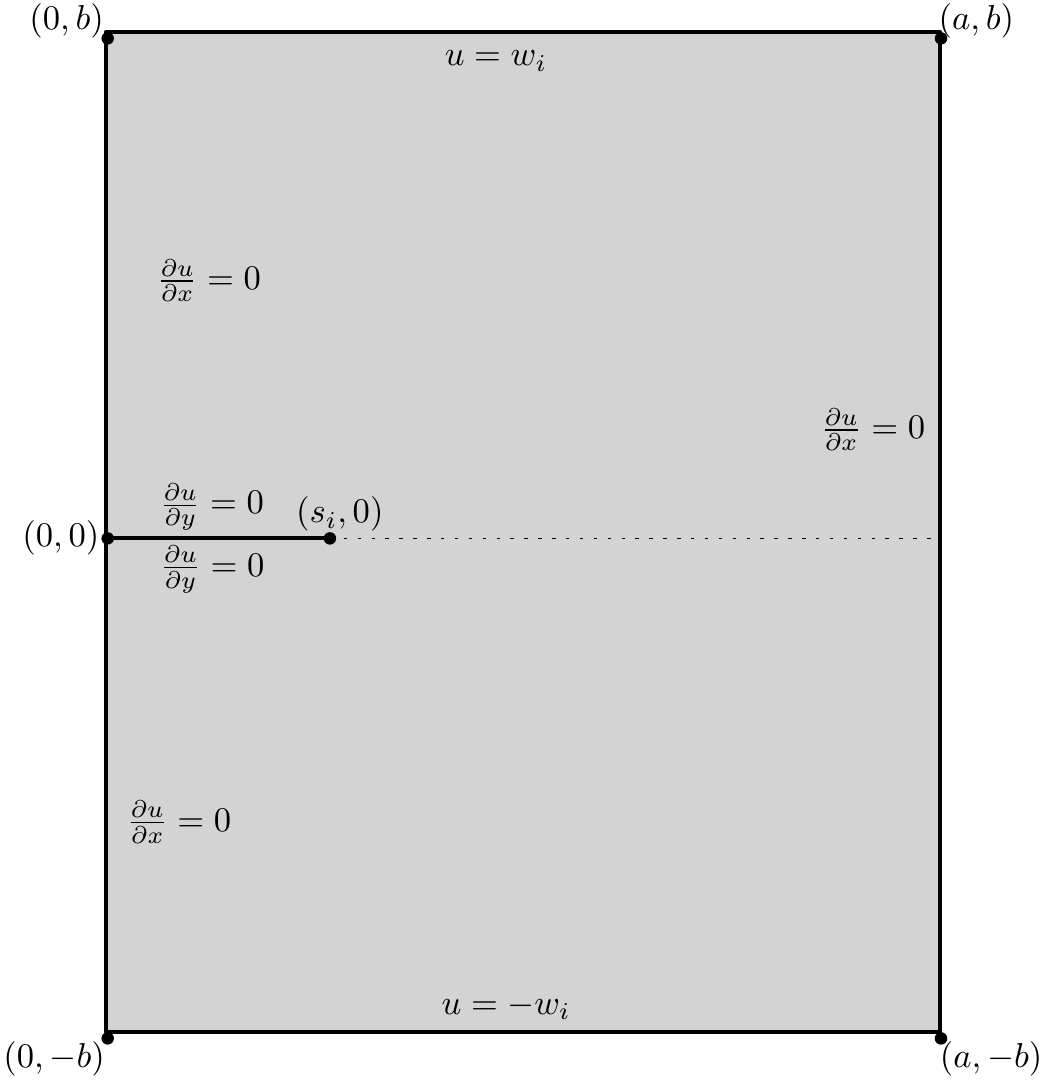}
  \caption{Schematic of the domain, boundary conditions, and crack tip
    position.}
  \label{fig:domain-scheme}
\end{figure}

We consider a model where the stored elastic energy at time $t_i$ is given by 
\begin{equation}\label{bulk-energy}
\mathcal E^e_i := \frac \alpha 2 \int_{\Om\setminus\Gamma}|\nabla
u_i|^2\d x\d y,
\end{equation}
the plastic dissipation distance between  $u_{i-1}$ and  $u_{i}$ is given by
\begin{equation}\label{plastic-energy}
\mathcal E^p_i:= \beta \int_{\Gamma_{s_i}^a}|[u_i]-[u_{i-1}]| \d x,
\end{equation}
while the energy dissipated by the crack growth in the time interval between $t_{i-1}$ and $t_i$ is given by
\begin{equation}
\mathcal E^c_i := \gamma (s_i-s_{i-1}).
\end{equation}
Here and henceforth $\alpha$, $\beta$, and $\gamma$ are positive material constants. 

We assume that the initial condition  $u_0$ for the displacement belongs to $H^1(\Om)$ and satisfies
\begin{equation}\label{initial u0}
\begin{cases}
\Delta u_0=0 & \text{in }\Om\,,\\
u_0(\cdot,\pm b)=\pm w_0 & \hbox{on } (0,a)\,,\\
\frac{\partial u_0}{\partial x}(0,\cdot)=\frac{\partial u_0}{\partial x}(a,\cdot)=0 & \text{on }(-b,b)\,.
\end{cases}
\end{equation}
Let  $s_0\in(0,a)$ be the initial condition for the crack tip. In our model, the pair $(u_i, s_i)$, with $i=1,\ldots,n$, is obtained inductively as a solution of the minimum problem 
\begin{equation}\label{minii}
\min_{\substack{u\in H^1(\Om\setminus\Gamma)\\ u=w_i\ \text{on }\partial_D\Om\\s_{i-1}\le s\le a
}}\Big\{ \frac \alpha 2 \int_{\Om\setminus\Gamma}|\nabla u|^2\d x\d y+\beta \int_{\Gamma_{s}^a}|[u]-[u_{i-1}]|\d x+\gamma(s-s_{i-1})\Big\}.
\end{equation}
In general this solution is not unique, but two solutions with the same value of $s$ must coincide by strict convexity in $u$. By considering the Euler equation of \eqref{minii} we see that the second term produces a cohesive force equal to $\pm\beta$ at the points of $\Gamma_{s}^a$ where $\pm([u]-[u_{i-1}])<0$.

Due to the symmetry of the data we shall work in  $\Om^+:=(0,a)\times(0,b)$ with Dirichlet boundary conditions on $\partial^+_D\Om:=[0,a]\times\{b\}$. This is justified by the following remark.  
\begin{remark}\label{odd}
Since the boundary condition \eqref{boundary condition} is odd with respect to $y$, a pair
 $(u_i,s_i)$ is a solution of \eqref{minii} if and only if $u_i$ is odd with respect to $y$ and $(u_i|_{\Om^+},s_i)$ is a solution of the minimum problem 
\begin{equation}\label{minii*}
\min_{\substack{u\in H^1(\Om^+)\\ u=w_i\ \text{on }\partial^+_D\Om\\s_{i-1}\le s\le a
}}\Big\{ \frac \alpha 2 \int_{\Om^+}|\nabla u|^2\d x\d y+\beta \int_{\Gamma_{s}^a}|u-u_{i-1}|\d x+\frac{\gamma}{2}(s-s_{i-1})\Big\}\,.
\end{equation}
Moreover, $u_0$ is odd  with respect to $y$ and $u_0|_{\Om^+}$ is the solution of the boundary value problem
\begin{equation}\label{initial u0*}
\begin{cases}
\Delta u=0 & \text{in }\Om^+\,,\\
u= w_0 & \hbox{if } y=b\,,\\
\frac{\partial u}{\partial x}=0 & \text{if }x=0\text{ or }x=a\,,\\
u=0 & \hbox{if } y=b\,.
\end{cases}
\end{equation}
\end{remark}
To solve \eqref{minii*}, for every $s_{i-1}\le s\le a$ we find the unique solution $u^s_i$ of the problem
\begin{equation}\label{min given s}
\min_{\substack{u\in H^1(\Om^+)\\ u=w_i\ \text{on }\partial^+_D\Om
}}\Big\{ \frac \alpha2 \int_{\Om^+}|\nabla u|^2\d x\d y+\beta\int_{\Gamma_{s}^a}|u-u_{i-1}|\d x\Big\}\,,
\end{equation}
and then we find a value $s_i$ of $s$ which solves the minimum problem
\begin{equation}\label{min in s}
\min_{s_{i-1}\le s\le a}
\Big\{ \frac \alpha2 \int_{\Om^+}|\nabla u^s_i|^2\d x\d y+\beta\int_{\Gamma_{s}^a}|u^s_i-u_{i-1}|\d x +\frac\gamma2(s-s_{i-1})\Big\}\,.
\end{equation}
The pair $(u_i,s_i)$, with $u_i:=u^{s_i}_i$, is a solution of \eqref{minii*}.

The solution of \eqref{min given s} will be constructed by using the solutions of some auxiliary boundary value problems for the Laplace equation, depending on a parameter $\sigma$, 
and then by choosing a particular value $\sigma^s_i$ of this parameter. 
Let us fix $ s\in[s_0,a]$. For every $\sigma\in[s,a]$ we consider the solution $u_i^{s,\sigma}\in H^1(\Om^+)$ of the problem (see Fig.\ \ref{BVP})
\begin{equation}\label{eqst}
\begin{cases}
\Delta u=0 & \text{in }\Om^+,\\
u=w_i &\text{if }y=b\,,\\
\frac{\partial u}{\partial x}=0 & \text{if }x=0\text{ or }x=a\,,\\
\frac{\partial u}{\partial y}=0 &\text{for }0<x<s \text{ and }y=0\,,\\
\frac{\partial u}{\partial y}=\frac{\beta}{\alpha} &\text{for }s<x<\sigma \text{ and }y=0\,,\\
u=0 &\text{for }\sigma<x<a \text{ and }y=0\,.
\end{cases}
\end{equation}
By the continuous dependence on the data, the function $u_i^{s,\sigma}$ is continuous in $H^1(\Om^+)$ 
with respect to $\sigma$.

\begin{figure}[ht!]
\includegraphics{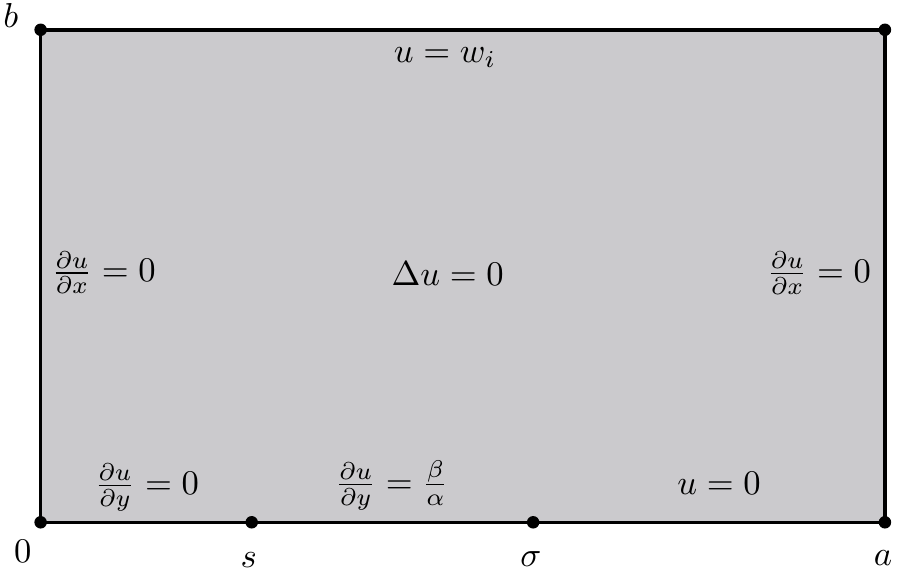}
\caption{The boundary value problem for $u_i^{s,\sigma}$.}
 \label{BVP}
\end{figure}

We define 
\begin{equation}\label{080703}
\sigma^s_i:=\max\{\sigma\in[s,a]:u_i^{s,\sigma}\ge0 \text{ in }\Om^+\}.
\end{equation}
The existence of the maximum follows easily from the continuous dependence of $u^{s,\sigma}$ on $\sigma$. We shall prove that the solution $u_i^s$ of \eqref{min given s} is given by
\begin{equation}
u_i^s=u_i^{s,\sigma}\,,\text{ with }\sigma=\sigma^s_i.
\end{equation}

\begin{remark}\label{inequality on s sigma}
By the maximum principle the inequality $u_i^{s,\sigma}\ge0$ in $\Om^+$ is satisfied if and only if the trace of $u_i^{s,\sigma}$ satisfies $u_i^{s,\sigma}(x,0)\ge0$ for every $x\in(s,\sigma)$. Therefore we have
\begin{equation}\label{08070378}
\sigma^s_i:=\max\{\sigma\in[s,a]:u_i^{s,\sigma}(x,0)\ge0 \text{ for every }x\in(s,\sigma)\}\,.
\end{equation}
\end{remark}

The following lemmas shows the connection between the minimum problem \eqref{min given s} and the functions 
$u_i^{s,\sigma}$. 

\begin{lemma}\label{main lemma}
 Let  $s\in[s_0,a)$, $i = 1,\ldots,n$, let $u_i^{s,\sigma}$ be defined as in \eqref{eqst}, with $\sigma=\sigma^s_i$ defined as in \eqref{080703}, and let $z\in H^1(\Om^+)$ be
 such that $0\le z\le u_i^{s,\sigma}$ a.e.\ on $\Gamma_{s}^a$. 
Then $ u_i^{s,\sigma}$ is the solution of the minimum problem
\begin{equation}\label{min given s and w0}
\min_{\substack{u\in H^1(\Om^+)\\ u=w_i\ \text{on }\partial^+_D\Om
}}\Big\{ \frac \alpha2 \int_{\Om^+}|\nabla u|^2\d x\d y+\beta\int_{\Gamma_s^a}|u-z|\d x\Big\}\,.
\end{equation}
\end{lemma}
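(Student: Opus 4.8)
\emph{Strategy.} The plan is to use strict convexity. The functional in \eqref{min given s and w0}, call it $F$, is strictly convex in $\nabla u$, so it has a unique minimizer, and it suffices to check that $\bar u:=u_i^{s,\sigma}$ with $\sigma=\sigma^s_i$ satisfies $F(\bar u)\le F(u)$ for every admissible $u$. Since $w_i\ge0$ and $z\ge0$ on $\Gamma_s^a$, replacing $u$ by $\max(u,0)$ does not increase $F$, so one may assume $u\ge0$ in $\Om^+$. Setting $v:=u-\bar u$ (which vanishes on $\partial^+_D\Om$) and expanding the square in the elastic energy,
\[
F(u)-F(\bar u)=\frac\alpha2\int_{\Om^+}|\nabla v|^2\d x\d y+\alpha\int_{\Om^+}\nabla\bar u\cdot\nabla v\,\d x\d y+\beta\int_{\Gamma_s^a}\bigl(|u-z|-|\bar u-z|\bigr)\d x .
\]
The first term is nonnegative, so everything reduces to showing that the sum of the remaining two integrals is $\ge0$.

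Here one uses the hypothesis $0\le z\le\bar u$. On $\Gamma_s^\sigma$ this gives $|\bar u-z|=\bar u-z$, hence $|u-z|-|\bar u-z|\ge(u-z)-(\bar u-z)=v$. On $\Gamma_\sigma^a$ the definition \eqref{080703} gives $\bar u=0$, so $0\le z\le0$ forces $z=0$ and then $|u-z|-|\bar u-z|=|u|=u=v$, using $u\ge0$. So it remains to prove $\alpha\int_{\Om^+}\nabla\bar u\cdot\nabla v\,\d x\d y+\beta\int_{\Gamma_s^a}v\,\d x\ge0$. Applying Green's identity to the harmonic $\bar u$ and inserting the boundary conditions of \eqref{eqst} (homogeneous Neumann on $\{x=0\}$, $\{x=a\}$ and $\Gamma_0^s$, Neumann datum $\beta/\alpha$ on $\Gamma_s^\sigma$, and $v=0$ on $\partial^+_D\Om$), all boundary contributions cancel except the one on $\Gamma_\sigma^a$, and the expression collapses to $\int_{\Gamma_\sigma^a}\bigl(\beta-\alpha\,\partial_y\bar u(x,0)\bigr)v(x,0)\,\d x$. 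Since $v=u\ge0$ on $\Gamma_\sigma^a$, the lemma follows once one proves the pointwise estimate $0\le\alpha\,\partial_y\bar u(x,0)\le\beta$ for $x\in(\sigma,a)$.

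This estimate is the heart of the matter. The lower bound is immediate from $\bar u\ge0$ in $\Om^+$ and $\bar u=0$ on $\Gamma_\sigma^a$. For the upper bound I would argue in two steps. First, $\bar u$ is nonincreasing in $x$: the function $\partial_x\bar u$ is harmonic, equals $w_i'\le0$ on $\{y=b\}$ by \eqref{decreasing in x}, vanishes on $\{x=0\}$, $\{x=a\}$ and (since $\bar u\equiv0$ there) on $\Gamma_\sigma^a$, and has homogeneous Neumann condition on $\Gamma_0^\sigma$, so the maximum principle gives $\partial_x\bar u\le0$. Second, $\partial_y\bar u(\sigma,0)=\beta/\alpha$: in polar coordinates about the transition point $(\sigma,0)$, the mixed conditions of \eqref{eqst} admit, besides the particular profile $\frac\beta\alpha y$, only singular harmonic modes of the form $c\,r^{1/2}\sin(\theta/2)$; the coefficient $c$ must be $\ge0$ for $\bar u\ge0$ to hold near $(\sigma,0)$, and $\le0$ because otherwise $u_i^{s,\sigma'}$ would remain $\ge0$ for $\sigma'$ slightly larger than $\sigma$, contradicting the maximality in \eqref{080703}; hence $c=0$, $\bar u$ is regular at $(\sigma,0)$ with leading term $\frac\beta\alpha y$, and $\partial_y\bar u(\sigma,0)=\beta/\alpha$. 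Combining the two steps, for $x\in(\sigma,a)$ one has $\bar u(x,y)\le\bar u(\sigma,y)$ and $\bar u(x,0)=\bar u(\sigma,0)=0$, so dividing by $y$ and letting $y\to0^+$ gives $\partial_y\bar u(x,0)\le\partial_y\bar u(\sigma,0)=\beta/\alpha$.

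Putting everything together, $F(u)-F(\bar u)\ge\frac\alpha2\int_{\Om^+}|\nabla v|^2\d x\d y\ge0$, with equality forcing $\nabla v\equiv0$, i.e. $v\equiv0$ since $v=0$ on $\partial^+_D\Om$; hence $\bar u$ is the minimizer. I expect the genuine obstacle to be the second step above: ruling out the square-root singularity of $\bar u$ at the free endpoint $(\sigma,0)$ is precisely where the characterization of $\sigma^s_i$ as a \emph{maximum} in \eqref{080703} is indispensable, and I would carry it out using the continuous dependence of $u_i^{s,\sigma}$ on $\sigma$ recorded just after \eqref{eqst} (possibly isolated as a short regularity lemma). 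The convexity reduction, the truncation $u\mapsto\max(u,0)$, and the Green's-identity bookkeeping are all routine.
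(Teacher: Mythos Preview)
Your argument is correct and follows precisely the route of \cite{DM-Toa2}, to which the paper's proof simply defers: the monotonicity of $\bar u$ in $x$ is their Lemma~5.7, the vanishing of the $r^{1/2}$ coefficient at $(\sigma,0)$ is Lemma~5.8, the resulting bound $\alpha\,\partial_y\bar u\le\beta$ on $\Gamma_\sigma^a$ is Lemma~5.9, and the convexity/Green's-identity reduction is Lemma~5.11. Your sketch is in fact more explicit than the paper's one-paragraph reference, and you correctly flag the $c\le0$ half of the free-endpoint analysis as the one step that genuinely uses the maximality in \eqref{080703} and requires care.
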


\begin{proof} The definition of $u_i^{s,\sigma}$ given here differs from the definition in \cite[formula (5.15)]{DM-Toa2} only in the boundary condition at $y=b$, which is now the nondecreasing function $w_i$. The proof in our case can be obtained by repeating the proofs of Lemmas 5.7, 5.8, 5.9, and 5.11 of \cite{DM-Toa2}, with minor changes. In particular, the assumption that $w_i$ is nonincreasing is used in the proof of Lemma 5.7, where the $C^\infty$-regularity of $u_i^{s,\sigma}$ at the vertices $(0,b)$ and
$(a,b)$ cannot be obtained for a nonconstant $w_i$, with no consequences on the conclusion.
\end{proof}

We now want to prove that the solution $u^s_i $ of problem \eqref{min given s} is given by $u^s_i = u_i^{s,\sigma}$, with $\sigma=\sigma^s_i$, where  $u_i^{s,\sigma}$ is defined by \eqref{eqst} and $\sigma^s_i$ by \eqref{080703}.
This will be a consequence of  Lemma \ref{main lemma} if we show that $u_{i-1}\le u_i^{s,\sigma}$ for $s\ge s_{i-1}$ and $\sigma=\sigma^s_i$. The tools to prove this inequality are given by the next lemma. For every $x$ and $y$ in $\R$,  we set $x\lor y: = \max\{x,y\}$ and $x \land y := \min\{x,y\}$.

\begin{lemma}\label{main lemma*}
Let $i,\,j\in\{1,\ldots,n\}$, with  $i\leq j$, and let  $s\in [s_{i-1},a]$, where $s_{i-1}$ is defined in \eqref{minii*} for $i \geq 2$. Then, setting $\sigma_{i-1}:=\sigma^{s_{i-1}}_{i-1}$ and $\sigma_j:=\sigma^{s}_j$ according to \eqref{080703},  we have
\begin{align}
 &u_{i-1}^{s_{i-1}, \sigma_{i-1}}\le u_j^{s, \sigma_j} \hbox{ in }\Omega^+\,,\label{monotonicity of u}\\
 & \sigma_{i-1} \leq  \sigma_j\,. \label{monotonicity of sigma}
\end{align}
\end{lemma}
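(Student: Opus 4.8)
The plan is to prove both claims by a maximum‑principle / comparison argument for the harmonic functions $u_{i-1}^{s_{i-1},\sigma_{i-1}}$ and $u_j^{s,\sigma_j}$, handling the two statements jointly since they are intertwined. First I would observe that by the monotonicity hypotheses \eqref{increasing in t} (which give $w_{i-1}\le w_j$ on $[0,a]$ for $i\le j$) and the induction hypothesis that $s_{i-1}\le s$, the two boundary value problems of type \eqref{eqst} are naturally ordered on the Dirichlet part $y=b$ and on the portion of $\Gamma$ where both functions are forced to vanish. The difficulty is that the flux boundary conditions on $\Gamma$ (the Neumann data $0$, $\beta/\alpha$, and the Dirichlet datum $0$) are imposed on \emph{different} intervals for the two functions, so one cannot simply subtract and apply the maximum principle on $\Omega^+$.

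To get around this I would argue by contradiction and consider the open set $U:=\{(x,y)\in\Omega^+ : u_{i-1}^{s_{i-1},\sigma_{i-1}}(x,y) > u_j^{s,\sigma_j}(x,y)\}$, and show $U=\emptyset$. The function $v:=u_{i-1}^{s_{i-1},\sigma_{i-1}} - u_j^{s,\sigma_j}$ is harmonic in $\Omega^+$, and on $\partial U\cap\Omega^+$ we have $v=0$. On the parts of $\partial U$ lying on $\partial\Omega^+$ I would check that $v\le 0$ cannot be violated: on $y=b$ we have $v=w_{i-1}-w_j\le 0$; on $x=0$ and $x=a$ we have $\partial v/\partial x=0$, so Hopf's lemma rules out a boundary maximum there unless $v$ is constant; and on $\Gamma$ I would split into the relevant subintervals. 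The key point on $\Gamma$ is the relation between $s_{i-1},\sigma_{i-1}$ and $s,\sigma_j$. Where $u_j^{s,\sigma_j}$ has the Dirichlet condition $u=0$ (i.e. $x>\sigma_j$) but $\sigma_j\ge\sigma_{i-1}$, the function $u_{i-1}^{s_{i-1},\sigma_{i-1}}$ is $\ge 0$ there by the definition \eqref{080703}/\eqref{08070378} of $\sigma_{i-1}$, so $v\ge 0$ — but this is the \emph{wrong} sign, so this is exactly where I must be careful: the argument has to be bootstrapped together with \eqref{monotonicity of sigma}.

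Concretely, I would prove \eqref{monotonicity of u} and \eqref{monotonicity of sigma} \emph{simultaneously} as follows. Step 1: first establish \eqref{monotonicity of u} \emph{under the temporary assumption} that the Neumann/Dirichlet structure of $u_j^{s,\sigma_j}$ dominates that of $u_{i-1}^{s_{i-1},\sigma_{i-1}}$ in the pointwise sense appropriate for comparison — i.e. using that where $u_{i-1}^{s_{i-1},\sigma_{i-1}}$ satisfies $\partial_y u = 0$ or $=\beta/\alpha$ on $\Gamma$, the other function satisfies a condition that, combined with $u_{i-1}^{s_{i-1},\sigma_{i-1}}(x,0)\ge 0$, is compatible with $v\le 0$; this is essentially the variational inequality characterization of $u_j^{s,\sigma_j}$ coming from Lemma~\ref{main lemma}, testing with $u_j^{s,\sigma_j}\lor u_{i-1}^{s_{i-1},\sigma_{i-1}}$ and $u_{i-1}^{s_{i-1},\sigma_{i-1}}\land u_j^{s,\sigma_j}$ (the lattice truncations, which is why $\lor$ and $\land$ were just introduced). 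Step 2: from \eqref{monotonicity of u} deduce \eqref{monotonicity of sigma}: since $u_{i-1}^{s_{i-1},\sigma_{i-1}}\le u_j^{s,\sigma_j}$ on $\Gamma$ and $u_{i-1}^{s_{i-1},\sigma_{i-1}}(x,0)\ge 0$ on $(s_{i-1},\sigma_{i-1})$, one gets $u_j^{s,\sigma_j}(x,0)\ge 0$ on a set forcing, through the maximality defining $\sigma_j$, that $\sigma_j\ge\sigma_{i-1}$ (here one also needs $s\le s_{i-1}$? no — one needs to reconcile $s\ge s_{i-1}$ with the interval of positivity, which is the subtle monotonicity bookkeeping). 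Step 3: close the loop by checking that the ordering of the boundary data used in Step 1 is indeed guaranteed once \eqref{monotonicity of sigma} is known, so that the comparison principle applies without circularity.

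I expect the main obstacle to be precisely this bookkeeping of the interval endpoints: making rigorous the claim that the mixed boundary conditions of the two problems are ``ordered'' in the sense needed for the maximum principle, given only $s\le s$ trivially, $s_{i-1}\le s$, and the as‑yet‑unproven $\sigma_{i-1}\le\sigma_j$. The cleanest route is probably to avoid pointwise comparison of fluxes altogether and instead run the whole argument through the obstacle‑type variational characterization supplied by Lemma~\ref{main lemma}: write each $u_i^{s,\sigma}$ as the minimizer of an energy over $H^1(\Omega^+)$ with the Dirichlet datum $w_i$, use the comparison function $u_{i-1}^{s_{i-1},\sigma_{i-1}}\land u_j^{s,\sigma_j}$ in the problem for $u_{i-1}^{s_{i-1},\sigma_{i-1}}$ and $u_{i-1}^{s_{i-1},\sigma_{i-1}}\lor u_j^{s,\sigma_j}$ in the problem for $u_j^{s,\sigma_j}$, add the two resulting inequalities, and exploit $w_{i-1}\le w_j$ together with $\beta\int_{\Gamma_s^a}|u-z|\,\d x$ monotonicity in $z$ to conclude $\int_{\Omega^+}|\nabla(u_{i-1}^{s_{i-1},\sigma_{i-1}}-u_j^{s,\sigma_j})^+|^2\,\d x\,\d y\le 0$, hence \eqref{monotonicity of u}; \eqref{monotonicity of sigma} then follows as in Step 2 above. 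I would also double‑check the degenerate endpoint cases $s=s_{i-1}$, $i=j$, and $\sigma_j=a$ separately, since the definition \eqref{080703} involves a closed interval $[s,a]$ and the maximum may be attained at $a$.
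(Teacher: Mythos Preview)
Your variational lattice route (what you call the ``cleanest route'') for \eqref{monotonicity of u} is a legitimate alternative to the paper's argument, but not quite as you phrase it: adding the two minimality inequalities and using $\int|\nabla(a\land b)|^2+\int|\nabla(a\lor b)|^2=\int|\nabla a|^2+\int|\nabla b|^2$ does \emph{not} yield $\int|\nabla(u_{i-1}^{s_{i-1},\sigma_{i-1}}-u_j^{s,\sigma_j})^+|^2\le0$, because the gradient terms cancel exactly. What you actually get is equality of the summed energies, hence equality in each minimality inequality, and then uniqueness of the minimizer (from strict convexity of the Dirichlet part) forces $u_{i-1}^{s_{i-1},\sigma_{i-1}}\land u_j^{s,\sigma_j}=u_{i-1}^{s_{i-1},\sigma_{i-1}}$. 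The paper takes a different, direct PDE route: it inserts the intermediate function $u_j^{s_{i-1},\sigma_{i-1}}$ (the geometry of the first function, the Dirichlet datum $w_j$ of the second), gets $u_{i-1}^{s_{i-1},\sigma_{i-1}}\le u_j^{s_{i-1},\sigma_{i-1}}$ by a plain maximum principle, and then compares $u_j^{s_{i-1},\sigma_{i-1}}$ with $u_j^{s,\sigma_j}$ by testing $v:=u_j^{s,\sigma_j}-u_j^{s_{i-1},\sigma_{i-1}}$ against $v\land0$. The circularity you worry about is dissolved by splitting $\Gamma$ at $s\lor\sigma_{i-1}$ rather than at $\sigma_j$: to the right one has $v\ge0$ because $u_j^{s_{i-1},\sigma_{i-1}}=0$ there, while to the left one has $\partial_y v\le0$, using the flux bound $\partial_y u_j^{s,\sigma_j}\le\beta/\alpha$ on $(\sigma_j,a)$ from \cite[Lemma~5.9]{DM-Toa2}. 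No a~priori ordering of $\sigma_{i-1}$ and $\sigma_j$ is needed.

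Your Step~2 for \eqref{monotonicity of sigma}, however, has a genuine gap. Knowing $u_j^{s,\sigma_j}(x,0)\ge0$ is simply the definition of $\sigma_j$ and tells you nothing about whether $\sigma_{i-1}$ lies in the admissible set defining $\sigma_j$; what you need is $u_j^{s,\sigma_{i-1}}\ge0$ in $\Omega^+$ (in the nontrivial case $\sigma_{i-1}>s$), which is a statement about a \emph{different} function. The paper obtains this by a separate comparison $u_{i-1}^{s_{i-1},\sigma_{i-1}}\le u_j^{s,\sigma_{i-1}}$, proved by adapting the same $v\land0$ argument with both functions sharing the Dirichlet interval $(\sigma_{i-1},a)$. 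Your lattice approach does not deliver this, since Lemma~\ref{main lemma} characterizes $u_j^{s,\sigma}$ as a minimizer only for $\sigma=\sigma^s_j$, not for $\sigma=\sigma_{i-1}$; you would need an additional argument here.
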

\begin{proof} By the maximum principle we have
\begin{equation}\label{u hat u}
0\le u_{i-1}^{s_{i-1}, \sigma_{i-1}}\le  u_j^{s_{i-1}, \sigma_{i-1}}\quad\hbox{ in }\Omega^+,
\end{equation}
where we compare two solutions of problem \eqref{eqst}, with $s=s_{i-1}$, and $\sigma=\sigma_{i-1}$, changing only the boundary condition from  $w_{i-1}$ to $w_j$, which satisfy $w_{i-1}\leq w_j$ by \eqref{increasing in t}.
To prove \eqref{monotonicity of u} it is enough to show that
\begin{equation}\label{hat u s hat u hat s}
 u_j^{s_{i-1}, \sigma_{i-1}} \leq u_j^{s, \sigma_j} \hbox{ in }\Omega^+\,.
\end{equation}
Let $v:=u_j^{s, \sigma_j} - u_j^{s_{i-1}, \sigma_{i-1}} $. Then 
$v\in H^1(\Om^+)$  and $\Delta v=0$ in $\Om^+$. See Figure~\ref{auxiliary bvp on v} for a schematic of the boundary conditions satisfied by $v$, that will be explained below.

\begin{figure}
\includegraphics{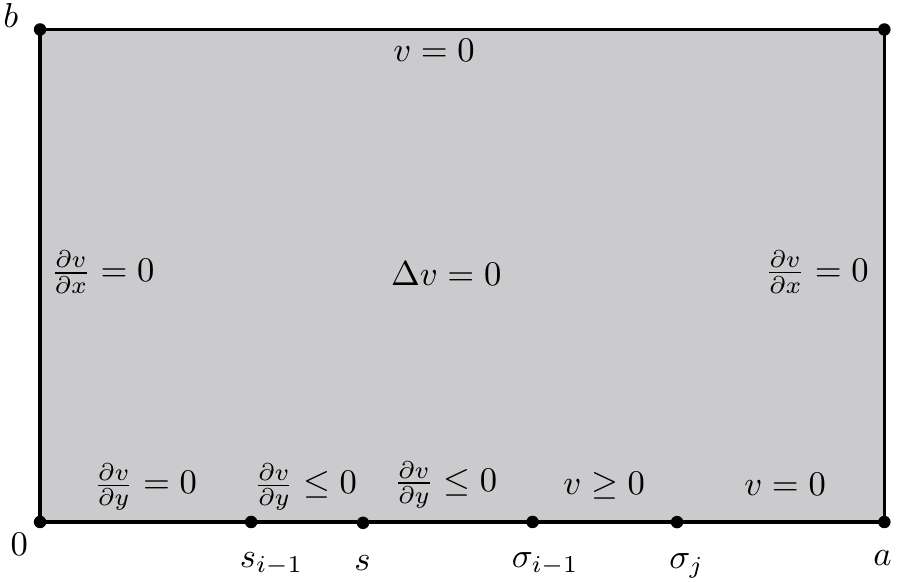}
\caption{Boundary condition for $v$ when $s<\sigma_{i-1}$.}
\label{auxiliary bvp on v}
\end{figure}


Since
$u_j^{s, \sigma_j} =u_j^{s_{i-1}, \sigma_{i-1}} = w_j$ for $y=b$, we have $v=0$ for $y=b$.
The equalities $\frac{\partial}{\partial x}u_j^{s, \sigma_j}  = \frac{\partial}{\partial x}u_j^{s_{i-1}, \sigma_{i-1}}=0$
for  $x=0$ or $x=a$ give 
\begin{equation}\label{zero derivarive}
\frac{ \partial v}{\partial x}=0~\hbox{ for }~x=0\hbox{ or }x=a\,.
\end{equation}

Since $u_j^{s_{i-1}, \sigma_{i-1}} \ge 0$ in $\Omega^+$  by \eqref{u hat u}  and  $u_j^{s_{i-1}, \sigma_{i-1}} = 0$  for $\sigma_{i-1}<x<a$ and $y=0$ by \eqref{eqst}, we have $\frac{\partial}{\partial y}u_j^{s_{i-1},\sigma_{i-1}} \ge 0$ for $\sigma_{i-1}<x<a$ and $y=0$. By \eqref{eqst} we have also
$\frac{\partial}{\partial y}u_j^{s_{i-1}, \sigma_{i-1}}=0$ for $0<x<s_{i-1}$ and $y=0$ and $\frac{\partial}{\partial y}u_j^{s_{i-1}, \sigma_{i-1}}=\frac{\beta}{\alpha}$  for $s_{i-1}<x<\sigma_{i-1}$ and $y=0$, hence
$\frac{\partial}{\partial y}u_j^{s_{i-1}, \sigma_{i-1}}\ge 0$ for $x\in(0,a)\setminus\{s_{i-1},\sigma_{i-1}\}$ and $y=0$.
Since $\frac{\partial}{\partial y}u_j^{s, \sigma_j}=0$ for $0<x<s$ and $y=0$  by \eqref{eqst}, 
the previous inequality gives
\begin{equation}\label{negative derivative}
\frac{\partial v}{\partial y}\le 0\hbox{ for }x\in(0, s)\setminus\{s_{i-1},\sigma_{i-1}\}\hbox{ and }y=0\,.
\end{equation}

By \cite[Lemma 5.9]{DM-Toa2}, which is still valid with our definition of $u_j^{s, \sigma_j}$, we have $\frac{\partial}{\partial y}u_j^{s, \sigma_j} \le \frac{\beta}{\alpha}$ for $\sigma_j<x<a$ and $y=0$. Since $\frac{\partial}{\partial y}u_j^{s, \sigma_j}= \frac{\beta}{\alpha}$ for $ s<x<\sigma^s_j$ and $y=0$ by \eqref{eqst}, we have $\frac{\partial}{\partial y}u_j^{s, \sigma_j}\le \frac{\beta}{\alpha}$ for $x\in (s,a)\setminus \{\sigma_j\}$ and $y=0$. 
Using the equality $\frac{\partial}{\partial y} u_j^{s_{i-1}, \sigma_{i-1}} =\frac{\beta}{\alpha}$  for $s_{i-1}<x<\sigma_{i-1}$ and $y=0$, given  by \eqref{eqst}, we deduce that
\begin{equation}\label{negative derivative 2}
\frac{\partial v}{\partial y}\le 0\hbox{ for }x\in (s,s \lor \sigma_{i-1})\setminus \{\sigma_j\} \hbox{ and }y=0\,.
\end{equation}

Since $u_j^{s, \sigma_j}\ge 0$ for $x\in(0,a)\setminus\{s,\sigma_j\}$ and $y=0$ by \eqref{080703}, while $u_j^{s_{i-1}, \sigma_{i-1}} =0$ for $ \sigma_{i-1}<x<a$ and $y=0$  by \eqref{eqst}, we have
\begin{equation}\label{v positive}
v\ge 0\hbox{ for }x\in    (s \lor \sigma_{i-1}, a)\setminus\{\sigma_j\} \hbox{ and }y=0\,.
\end{equation}

We now prove that, using a suitable version of the maximum principle, the boundary conditions discussed above (see Figure \ref{auxiliary bvp on v}) imply that $v\geq 0$ in $\Omega^+$.
Let $\varphi\in H^1(\Om^+)$ with $\varphi=0$ for $y=b$ and for $s \lor \sigma_{i-1}<x<a$ and $y=0$. Suppose in addition that $\varphi$ vanishes in a neighborhood of the vertices of the rectangle $\Om^+$ and of the points $(s_{i-1},0)$, $(\sigma_{i-1},0)$, $(s,0)$, $(\sigma_j,0)$.

Since $\int_{\Omega^+}\Delta v \varphi\,\d x\d y=0$, integrating by parts from \eqref{zero derivarive} we obtain the weak formulation 
\begin{equation}\label{weak formulation}
\int_{\Omega^+}\nabla v\nabla\varphi\,\d x\d y=-\int_{\Gamma_0^{s \lor \sigma_{i-1}}}\frac{\partial v}{\partial y}\varphi\, \d x
\end{equation}
Since points have capacity zero, every function in $H^1(\Omega^+)$ can be approximated in $H^1(\Omega^+)$ by functions that vanish in a neighborhood of a prescribed finite set of points. Therefore \eqref{weak formulation} holds for every $\varphi\in H^1(\Omega^+)$ with $\varphi=0$ for $y=b$ and for $s \lor \sigma_{i-1}<x<a$ and $y=0$. 
Taking $\varphi:=v\land 0$ we obtain
$$
\int_{\Omega^+}|\nabla (v\land 0)|^2=\int_{\Omega^+}\nabla v\nabla(v\land 0)\,\d x\d y=-\int_{\Gamma_0^{s \lor \sigma_{i-1}}}\frac{ \partial v}{\partial y}(v\land 0)\d x\,.
$$
Since $\frac{ \partial v}{\partial y}\le 0$ a.e.\ in $\Gamma_0^{s \lor \sigma_{i-1}}$ by \eqref{zero derivarive}-\eqref{negative derivative 2}, the right-hand side of the previous equality is less than or equal to $0$,
which gives $\nabla(v\land 0)=0$ in $\Omega^+$. Taking into account the boundary condition $v=0$ on $\partial_D^+\Omega$
we get $v\land 0=0$ in $\Omega^+$. This implies $v\ge0$, which proves~\eqref{hat u s hat u hat s}.

Property \eqref{monotonicity of sigma} is trivial if $\sigma_{i-1}\leq s$, since $s\leq\sigma_j$ by \eqref{080703}. If $s<\sigma_{i-1}$  inequality \eqref{monotonicity of sigma} follows from \eqref{080703} and from the inequality $u_{i-1}^{s_{i-1},\sigma_{i-1}}\leq u^{s,\sigma_{i-1}}_j$, which can be proved by adapting the argument used in the proof of the inequality $v\geq 0$.
\end{proof}

We are now able to prove the result which allows us to construct the solution of the minimum problems \eqref{min given s} by means of the solutions of \eqref{eqst}.

\begin{lemma}\label{main lemma**}
Assume that \eqref{w(t) Cinfty}-\eqref{decreasing in x} hold. Then for every $i=1,\dots,n$ and every $s_{i-1}\le s\le a$ the solution $u^s_i$ of the minimum problem \eqref{min given s} coincides with the function $u^{s,\sigma}_i$ defined in \eqref{eqst}, with boundary data $w_i$ and with $\sigma=\sigma^s_i$ defined in~\eqref{080703}. 
\end{lemma}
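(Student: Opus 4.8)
The plan is to derive the statement from Lemma \ref{main lemma} and Lemma \ref{main lemma*} by induction on $i$. Observe first that \eqref{min given s} is literally the minimum problem \eqref{min given s and w0} of Lemma \ref{main lemma} for the choice $z:=u_{i-1}$; hence it suffices to check that this $z$ meets the hypothesis of that lemma, namely $0\le u_{i-1}\le u_i^{s,\sigma}$ a.e.\ on $\Gamma_s^a$ with $\sigma:=\sigma^s_i$. Once this is verified, Lemma \ref{main lemma} identifies $u_i^{s,\sigma^s_i}$ as the (unique, by strict convexity) minimizer of \eqref{min given s}, i.e.\ $u^s_i=u_i^{s,\sigma^s_i}$. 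The degenerate case $s=a$ is trivial, since $\Gamma_a^a$ is a single point and both \eqref{min given s} and \eqref{eqst} then reduce to the harmonic function with datum $w_i$ on $\partial^+_D\Om$ and homogeneous Neumann condition on the rest of $\partial\Om^+$; so I may assume $s\in[s_{i-1},a)$, which is the range covered by Lemma \ref{main lemma}. In the base case $i=1$ the initial datum $u_0$ solves \eqref{initial u0*}, hence vanishes identically on $\Gamma$, so $u_0=0$ on $\Gamma_s^a$ and the required inequality collapses to $u_1^{s,\sigma}\ge0$ there, which holds for $\sigma=\sigma^s_1$ directly by the definition \eqref{080703}; thus $u^s_1=u_1^{s,\sigma^s_1}$.

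For the inductive step, suppose the statement is known for the index $i-1$, with $i\ge2$. Since $(u_{i-1},s_{i-1})$ is a solution of \eqref{minii*}, the displacement $u_{i-1}$ is the solution of \eqref{min given s} for the index $i-1$ with $s=s_{i-1}$, and $s_{i-1}$ lies in the admissible range of that problem; the inductive hypothesis, evaluated at $s=s_{i-1}$, then gives $u_{i-1}=u_{i-1}^{s_{i-1},\sigma_{i-1}}$ with $\sigma_{i-1}:=\sigma^{s_{i-1}}_{i-1}$, and in particular $u_{i-1}\ge0$ in $\Om^+$ by \eqref{080703}. Now fix $s\in[s_{i-1},a)$ and set $\sigma_i:=\sigma^s_i$. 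Applying Lemma \ref{main lemma*} with its index pair $(i,j)$ taken to be $(i,i)$ — admissible, since $i\le j$ is allowed there and $s\ge s_{i-1}$ holds by construction — we obtain $u_{i-1}^{s_{i-1},\sigma_{i-1}}\le u_i^{s,\sigma_i}$ in $\Om^+$, i.e.\ $u_{i-1}\le u_i^{s,\sigma_i}$ in $\Om^+$. Combining the two bounds yields $0\le u_{i-1}\le u_i^{s,\sigma_i}$ a.e.\ on $\Gamma_s^a$, so Lemma \ref{main lemma} applied with $z:=u_{i-1}$ gives $u^s_i=u_i^{s,\sigma_i}$, which closes the induction.

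I do not anticipate a genuine obstacle: granted Lemmas \ref{main lemma} and \ref{main lemma*}, the proof is essentially bookkeeping. The two points that deserve care are the identification $u_{i-1}=u_{i-1}^{s_{i-1},\sigma_{i-1}}$ — precisely what forces the argument to proceed by induction rather than by a single invocation of Lemma \ref{main lemma} — and the remark that the sign requirement ``$0\le z$'' in Lemma \ref{main lemma} is automatic, coming either from $u_0\equiv0$ on $\Gamma$ in the base case or from the positivity built into the definition \eqref{080703} of $\sigma^s_i$ in the inductive step.
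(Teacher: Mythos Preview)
Your proof is correct and follows essentially the same inductive route as the paper's own proof, invoking Lemma~\ref{main lemma} with $z=u_{i-1}$ and using Lemma~\ref{main lemma*} (with $j=i$) to verify the ordering $0\le u_{i-1}\le u_i^{s,\sigma^s_i}$. Your treatment is in fact slightly more careful than the paper's: you note explicitly that the comparison is only needed on $\Gamma_s^a$ (which is all Lemma~\ref{main lemma} requires), and you dispose separately of the endpoint case $s=a$, which lies outside the hypothesis $s\in[s_0,a)$ of Lemma~\ref{main lemma}.
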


\begin{proof}
The proof is obtained by induction on $i$ using Lemmas \ref{main lemma} and \ref{main lemma*}. Since $u_0$ satisfies \eqref{initial u0*}, for every  $s_0\leq s\leq a$ by \eqref{080703} we have $u_0=0\leq u^{s,\sigma}_1$ for $y=0$ and $\sigma=\sigma^s_1$, while  $0 \leq u_0 = w_0\le w_1=u^{s,\sigma}_1$ for  $y=0$ and  $\sigma=\sigma^s_1$ by \eqref{increasing in t}. Using the comparison principle we obtain
$0\leq u_0\leq u_1^{s,\sigma}$in $\Omega^+$ for $s_0\leq s\leq a$ and  $\sigma=\sigma^s_1$. Then $u^s_1=u^{s,\sigma}_1$ with  $\sigma=\sigma^s_1$ by Lemma \ref{main lemma} with $z=u_0$.

Assume now that the statement of the Lemma is true for step $i-1$. Then in particular $u_{i-1}=u^{s_{i-1}}_{i-1}=u^{s_{i-1}, \sigma}_{i-1}$ with $ \sigma=\sigma^{s_{i-1}}_{i-1}$. The statement of the Lemma for step $i$ follows from Lemma \ref{main lemma}, applied with $z=u_{i-1}$. We see that the hypothesis of Lemma \ref{main lemma}  are satisfied using Lemma \ref{main lemma*} with  $j = i$.
\end{proof}

\section{Finite element approximation}\label{FEM}

In order to provide a numerical approximation to problem
\eqref{minii*}, we  fix a family  $\{\mathcal T_h(\Omega^+)\}_{h>0}$ of conforming subdivisions of $\overline\Omega^+$ 
made of
quadrilaterals, with $h$ denoting their maximal size. We assume that
$\mathcal T_h(\Omega^+)$ are shape-regular and quasi-uniform in the
sense of \cite{Ciarlet2002,ErnGuermond2004}, and that they contain the point $(s_0,0)$ among their vertices.  Each subdivision $\mathcal T_h(\Omega^+)$ induces a discrete set $\mathcal S_h$ in the segment $[s_0, a]\times\{0\}$, composed of all vertices of $\mathcal T_h$ that belong to $[s_0, a]\times \{0\}$. 

Given $s,\, \sigma\in \mathcal{S}_h$ with $s\leq \sigma$, we first consider a finite element approximation of the auxiliary problem \eqref{eqst}.

Let $\mathbb V_h$ denote the subspace of $H^1(\Omega^+)$ continuous piecewise
bi-linear functions subordinate to $\mathcal T_h(\Omega^+)$, and let $I_h : H^1(\Omega^+) \to \mathbb V_h$ be the Scott-Zhang interpolation \cite{ScottZhang1990} which has the following approximation property
\begin{equation}\label{i:scott-zhang}
  \|v-I_h v\|_{H^1(\Omega^+)}\leq C_{r,\Omega^+}
  h^r\|v\|_{H^{1+r}(\Omega^+)},\quad \text{for } v\in
  H^{1+r}(\Omega^+),
\end{equation}
where $C_{r,\Omega^+}$ depends on $r$, $\Omega^+$, and on the shape regularity of $\mathcal{T}_h$, but is independent of $h$.

Let $\mathbb
V^{\sigma}_0$ be the space of test functions in
$H^1(\Omega^+)$ whose trace vanish on $\Gamma_\sigma^a$ and on
$[0,a]\times \{b\}$, and let $\mathbb
V^\sigma_{h,0} \subset \mathbb V_0$ be its discrete counterpart. Moreover, let  $\mathbb V^\sigma_{w}$ be the  the space of $H^1(\Omega^+)$ functions whose trace on $[0,a]\times
\{b\}$ coincides with the function $w$ and that vanish on $\Gamma_\sigma^a$, and let $\mathbb V^\sigma_{h,w}$ be the space of their Scott-Zhang interpolation onto $\mathbb V^\sigma_h$.

The discrete counterpart of problem \eqref{eqst} then reads: Given the
current boundary condition $w_i$, a crack tip opening  $s\in \mathcal{S}_h$, and $\sigma \in \mathcal{S}_h$,  with $s\leq \sigma$, find $u^{s,\sigma}_{i,h} \in \mathbb V^\sigma_{h,w_i}$ satisfying
\begin{equation}\label{eqst-discrete}
	\int_{\Omega^+}\nabla u^{s,\sigma}_{i,h}\nabla v_h \d x \d y = \int_{\Gamma_s^\sigma}
        \frac{\beta}{\alpha} 
        v_h \, \d x,\qquad\forall v_h\in \mathbb V^\sigma_{h,0} .
\end{equation}
Since the subdivision $\mathcal T_h(\Omega^+)$ is conforming w.r.t. both
$s$ and $\sigma$, i.e., both $s$ and $\sigma$ coincide with
vertices of $\mathcal T_h(\Omega^+)$, then each term in
\eqref{eqst-discrete} can be computed exactly, resulting in a
symmetric and positive definite linear system of equations whose
solution can be computed numerically using a standard preconditioned
conjugate gradient method. By~\cite[Lemma 5.4 and Lemma 5.5]{DM-Toa2},
there exists $r\in(0,1)$ such that $u^{s,\sigma}_i \in H^{1+r}(\Om^+)$,
and we conclude that the numerical solution converges to the exact solution with the following estimate:
\begin{equation}
  \label{eq:fem-estimates}
  \| u^{s,\sigma}_i - u^{s,\sigma}_{i,h} \|_{H^1(\Om^+)} \leq
  C_{r,\Om^+} h^r \| u^{s,\sigma}_i \|_{H^{1+r}(\Om^+)}. 
\end{equation}

An approximate solution to problem~\eqref{minii*} is then obtained by induction on $i$ through the solution of a series of auxiliary problems~\eqref{eqst-discrete}. To be precise, suppose that we know an approximate solution $(u_{i-1,h}, s_{i-1, h})$ of~\eqref{minii*} at the time step $i-1$.  To construct $(u_{i,h}, s_{i,h})$, we proceed as follows.

For any $s\in \mathcal{S}_h$ with $s\geq s_{i-1,h}$, we consider $\sigma^s_{i,h}$ the largest $\sigma \in \mathcal{S}_h$ such that $\sigma \geq s$ and $u^{s, \sigma}_{i,h} \geq 0$.  Thanks to~\eqref{monotonicity of sigma}, it is enough to consider only $\sigma \geq s \lor \sigma^{s_{i-1,h}}_{i-1,h}$.

We set $u^s_{i,h} = u^{s,\sigma}_{i,h}$ with $\sigma=\sigma^s_{i,h}$. According to Lemma~\ref{main lemma**}, $u^s_{i,h}$ can be considered as the discrete counterpart of the solution $u^s_i$ of problem~\eqref{min given s}.

The final step of our algorithm consists in finding a solution $s_{i,h} \in \mathcal{S}_h$ to the minimum problem
\begin{equation*}
\label{discrete energy}
\min_{\substack{s \in \mathcal{S}_h \\ s\geq s_{i-1,h}}
}\Big\{ 
\frac \alpha 2 \int_{\Om^+}|\nabla u^s_{i,h}|^2\d x\d y+ 
\beta \int_{\Gamma_{s}^a}|u^s_{i,h}-u_{i-1,h}|\d x+\frac\gamma 2(s-s_{i-1, h})
\Big\}.
\end{equation*}
The corresponding discrete solution at time index $i$ is given by $u_{i,h}:=u^{s_{i,h}}_{i,h}$, so that the pair $(u_{i,h}, s_{i,h})$   can be considered as the discrete counterpart of the solution of problem~\eqref{minii*}.

\section{A numerical example}

The example in this section has been implemented using the
\texttt{deal.II} library~\cite{dealii9.1, dealii2020, deal2lkit}. 
We consider the domain $\Omega^+ = [0,a]\times[0,b]$, with $a=2$ and
$b=0.5$, set the initial crack tip to $s_0=0.1$, and evolve the
problem to a final time $T=2.5$ for fixed values of $\alpha = 100$ and
$\gamma = 0.5$, varying $\beta$ in the set $\{20, 40, 80, 160\}$. We
consider the time dependent boundary condition
\begin{equation}
  \label{eq:experiments-boundary-condition}
  w_i(x) =
  c_1\left(\frac12-\frac{1}{\pi}\arctan\left(\frac{(x-t_i-s_0)c_2\pi}{c_1}\right)\right),
  \qquad i\geq 0\,,
\end{equation}
for $c_1=0.1$ and $c_2=0.2$, which is shown in
Figure~\ref{fig:boundary-condition} for some time steps, and set $u_0 = 0$.

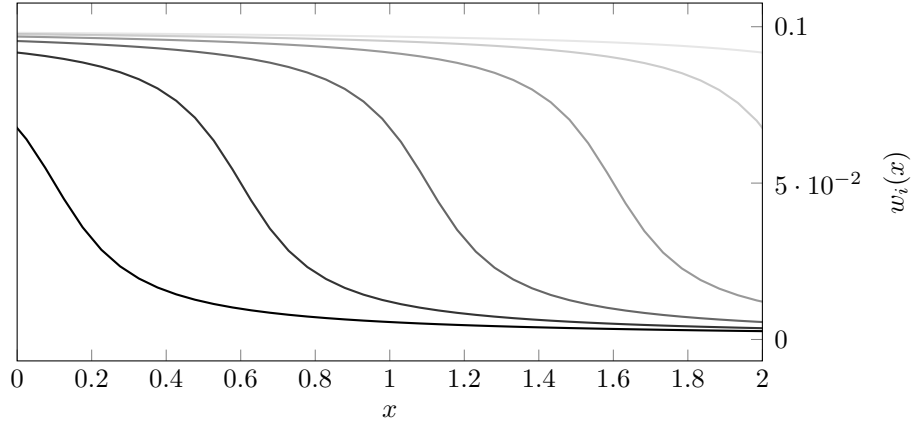
\begin{figure}
  \centering
  \tikzsetnextfilename{boundary_conditions}
  \begin{tikzpicture}
    \begin{axis}[
      xmin=0,
      xmax=2,
      samples=200,
      width=.9\textwidth,
      height=.5\textwidth,
      ytick pos=right,
      xlabel=$x$,
      ylabel=$w_i(x)$
      ]
      \addplot[thick, opacity=1.0]{.1*(1.0/2.0-rad(atan((x-0.0-0.1)*.2*3.14159265359/.1))/3.14159265359)};
      \addplot[thick, opacity=0.8]{.1*(1.0/2.0-rad(atan((x-0.5-0.1)*.2*3.14159265359/.1))/3.14159265359)};
      \addplot[thick, opacity=0.6]{.1*(1.0/2.0-rad(atan((x-1.0-0.1)*.2*3.14159265359/.1))/3.14159265359)};
      \addplot[thick, opacity=0.4]{.1*(1.0/2.0-rad(atan((x-1.5-0.1)*.2*3.14159265359/.1))/3.14159265359)};      
      \addplot[thick, opacity=0.2]{.1*(1.0/2.0-rad(atan((x-2.0-0.1)*.2*3.14159265359/.1))/3.14159265359)};            
      \addplot[thick, opacity=0.1]{.1*(1.0/2.0-rad(atan((x-2.5-0.1)*.2*3.14159265359/.1))/3.14159265359)};            
    \end{axis}
  \end{tikzpicture}
  \caption{Evolution of the boundary condition $w_i$ for $t_i = 0,
    0.5, 1.0, 1.5, 2.0, 2.5$, from left to right.}
  \label{fig:boundary-condition}
\end{figure}

\begin{figure}
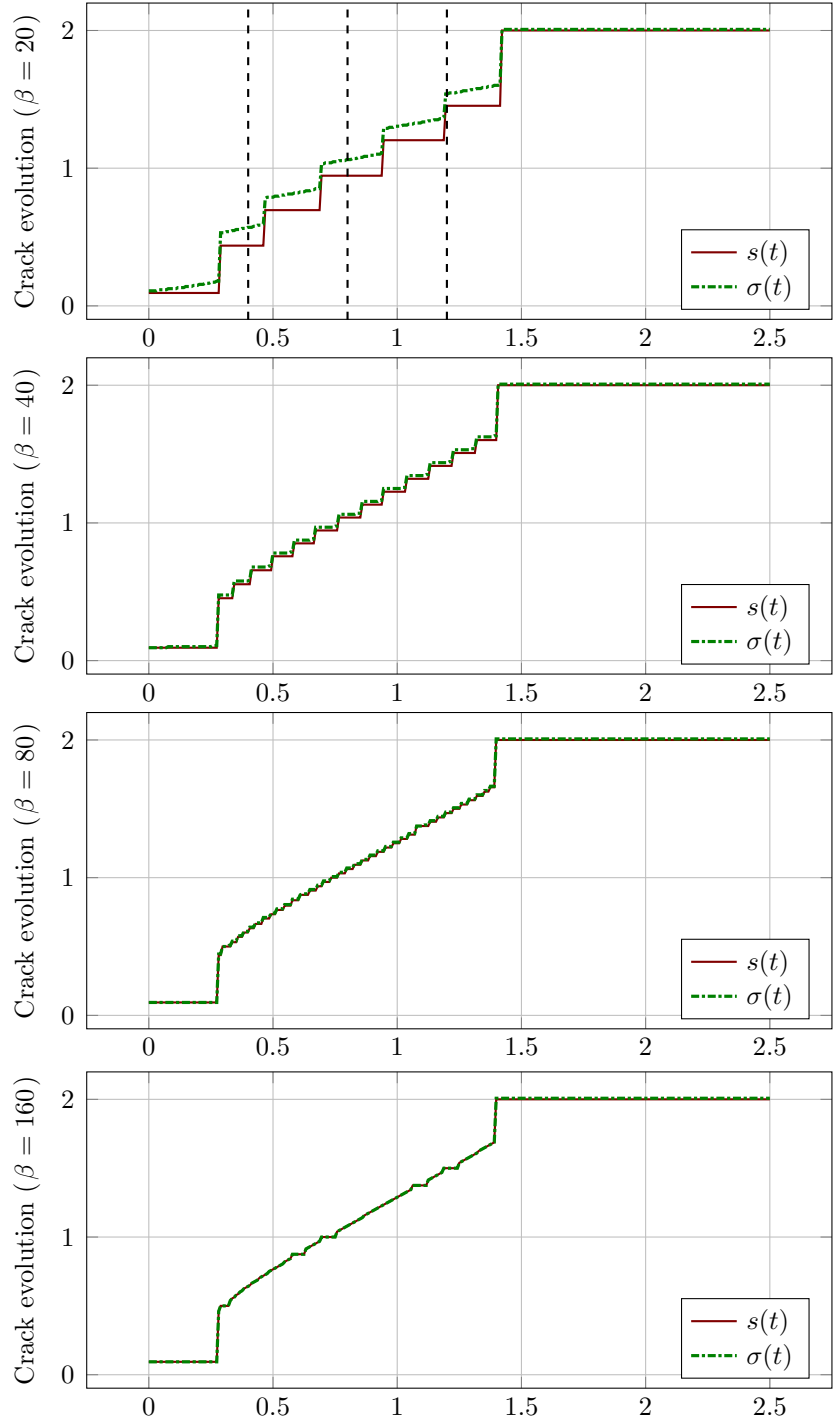

  \centering
  \FrontsPlot{beta_0020_fronts.txt}{20}{%
    \draw [thick, dashed] (.4,-0.1) -- (.4,2.2);%
    \draw [thick, dashed] (.8,-0.1) -- (.8,2.2);%
    \draw [thick, dashed] (1.2,-0.1) -- (1.2,2.2);%
  }
  
  \FrontsPlot{beta_0040_fronts.txt}{40}{}

  \FrontsPlot{beta_0080_fronts.txt}{80}{}

  \FrontsPlot{beta_0160_fronts.txt}{160}{}
  
  \caption{Crack fronts propagation for $\beta = 20, 40, 80$, and
    $160$. The vertical dash lines in the plot for $\beta=20$ show the
    times at which the solution is plotted in
    Figures~\ref{fig:solution-20} and
    \ref{fig:solution-antiplane-20}.}
  \label{fig:crack-fronts}
\end{figure}

Figure~\ref{fig:crack-fronts} shows clearly the jerky evolution of the
crack tip when $\beta$ is small. As the value of $\beta$ increases, the crack tip evolves with more and more jumps, and the cohesive zone becomes smaller and
smaller. When $\beta$ is very large, the evolution is essentially
brittle, and the cohesive zone cannot be detected by the numerical algorithm. This is coherent with the results presented in~\cite[Subsection 8.2]{DM-Toa}.

\begin{figure}
  \centering

  \SolutionPlot{0}{0.4}{
      \legend{
        $u_{h,i}(x,0)$ \\
        $u_{h,i}(x,b) = w_i(x)$ \\
        Cohesive zone \\
      };    
    }

  \SolutionPlot{1}{0.8}{}

  \SolutionPlot{2}{1.2}{}
  
  \caption{Solution at $y=0$ and $y=0.5$, for $t=0.4, 0.8$, and $1.2$
    for the case $\beta=20$. The lower part is added by simmetry.}
  \label{fig:solution-20}
\end{figure}

\begin{figure}
  \centering

   \includegraphics[height=.33\textheight]{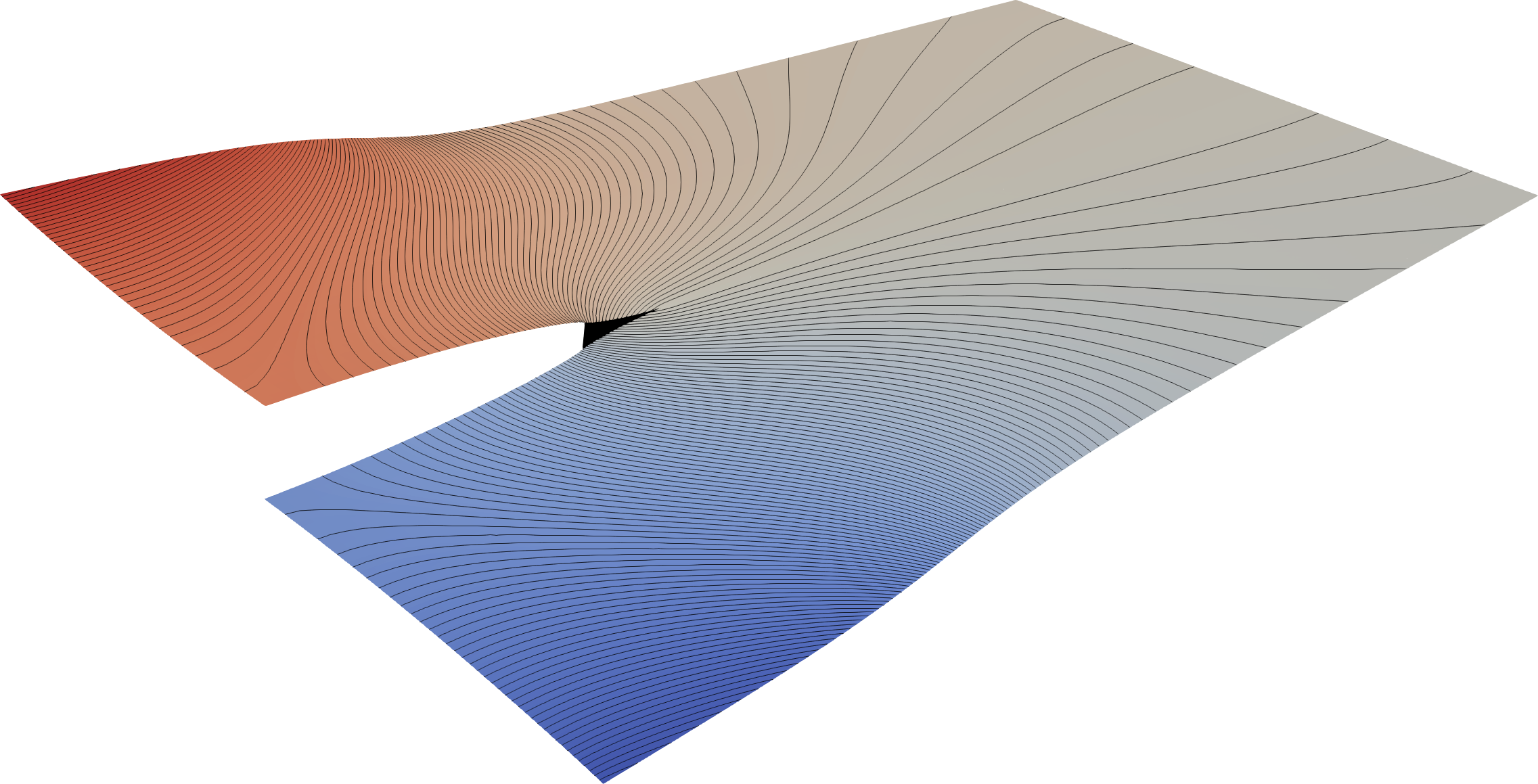}

   \includegraphics[height=.33\textheight]{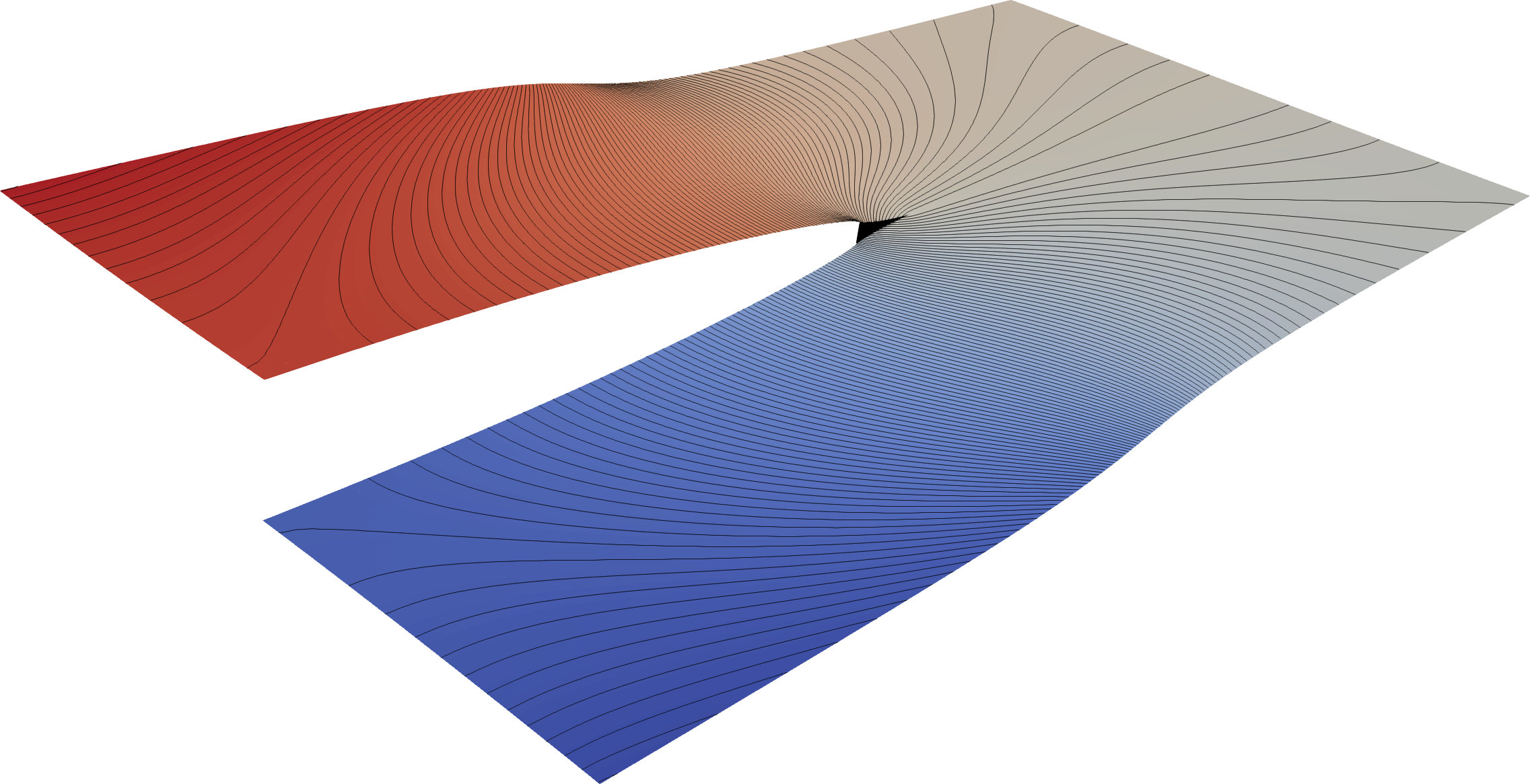}

  \includegraphics[height=.33\textheight]{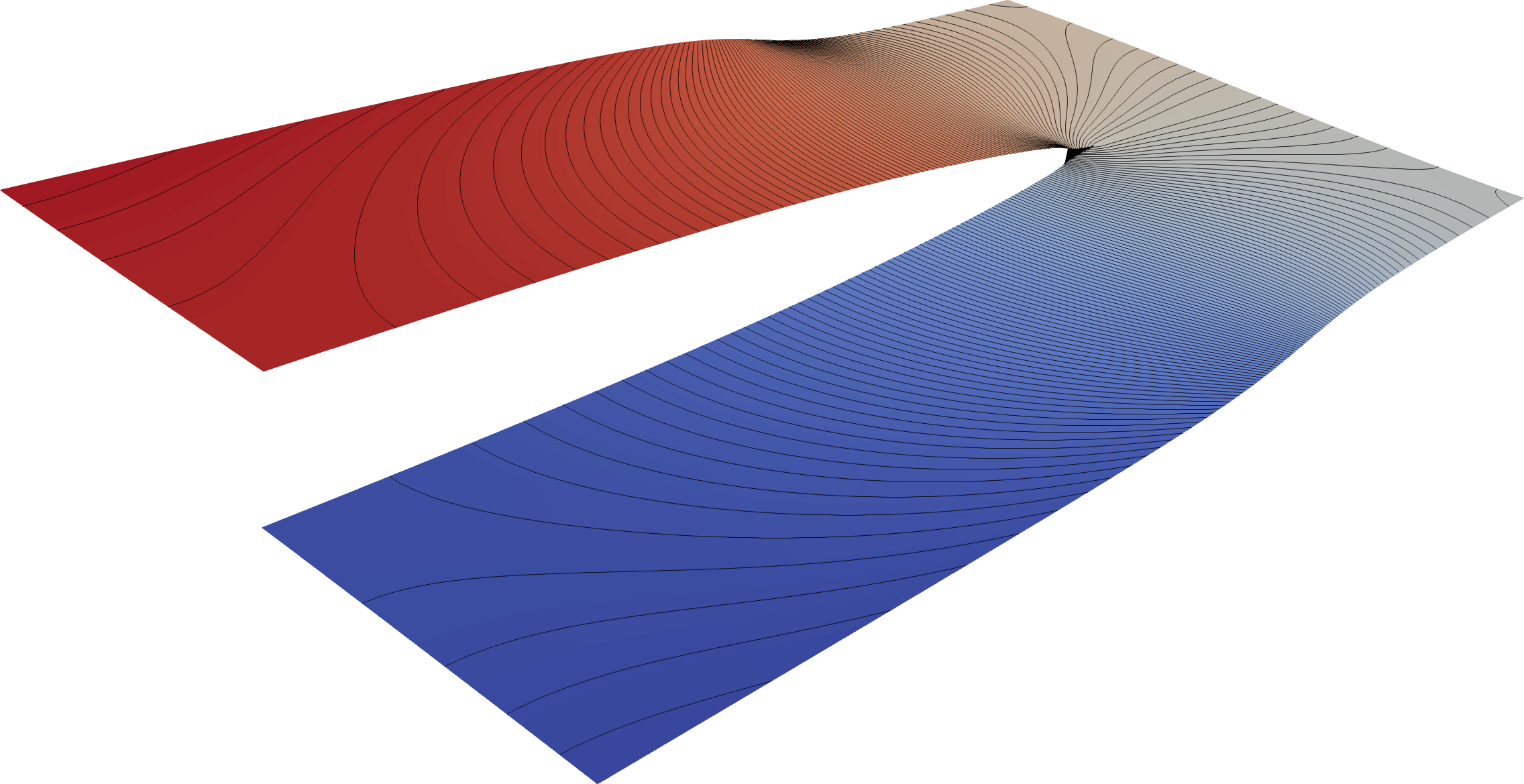}

  \caption{Antiplane representation of the solution at $t=0.4, 0.8$,
    and $1.2$ for the case $\beta=20$.}
  \label{fig:solution-antiplane-20}
\end{figure}

\vspace{1cm}

\noindent \textsc{Acknowledgements.}
 This paper is based on work supported by the National Research Projects (PRIN  2017) ``Variational methods for stationary and evolution problems with singularities and interfaces'', and ``Numerical Analysis for Full and Reduced Order Methods for the efficient and accurate solution of complex systems governed by Partial Differential Equations'', funded by the Italian Ministry of Education, University, and Research.

 \vspace{1cm}

\bibliographystyle{abbrv}
\bibliography{references}

\end{document}